\documentclass[12pt]{amsart}
\usepackage{amsfonts}
\usepackage{amsmath}
\usepackage{amssymb,latexsym}
\usepackage[mathcal]{eucal}
\usepackage{amscd}

\usepackage{amsxtra}

\input xy
\xyoption{all}

\oddsidemargin 0.1875 in \evensidemargin 0.1875in
\textwidth 6 in 
\textheight 230mm \voffset=-4mm

\linespread{1.3}


\newcommand{\ch}{\mathbf{1}}

\newcommand{\Z}{\mathbb{Z}}

\newcommand{\N}{\mathbb{N}}

\newcommand{\al}{\alpha}

\newcommand{\del}{\delta}

\newcommand{\ep}{\epsilon}
\newcommand{\sig}{\sigma}

\newcommand{\la}{\lambda}

\newcommand{\br}{\vspace{3 mm}}

\newcommand{\rest}{\upharpoonright}

\newcommand{\Aut}{{\rm{Aut\,}}}

\newcommand{\Id}{{\rm{Id}}}

\mathchardef\mhyphen="2D





\swapnumbers
\theoremstyle{plain}
\newtheorem{thm}{Theorem}[section]
\newtheorem{cor}[thm]{Corollary}
\newtheorem{lem}[thm]{Lemma}
\newtheorem{prop}[thm]{Proposition}

\theoremstyle{definition}
\newtheorem{defn}[thm]{Definition}

\numberwithin{thm}{section}




\usepackage[pdftex,bookmarks,colorlinks,breaklinks]{hyperref}

\begin{document}

\title[Relative weak mixing is generic]
{Relative weak mixing is generic}

\author{Eli Glasner}

\author{Benjamin Weiss}

\address{Department of Mathematics\\
     Tel Aviv University\\
         Tel Aviv\\
         Israel}
\email{glasner@math.tau.ac.il}

\address {Institute of Mathematics\\
 Hebrew University of Jerusalem\\
Jerusalem\\
 Israel}
\email{weiss@math.huji.ac.il}

\date{July 4, 2018}

\begin{abstract}
A classical result of Halmos asserts that among measure preserving transformations
the weak mixing property is generic.
We extend Halmos' result to the collection of ergodic extensions of a fixed, but arbitrary,
aperiodic transformation $T_0$. We then use a result of Ornstein and Weiss 
to extend this relative theorem to the general
(countable) amenable group.
\end{abstract}

\keywords{relative weak mixing, Rohlin's lemma, amenable groups}

\thanks{{\em 2010 Mathematical Subject Classification:
37A25, 37A05, 37A15, 37A20}}

\keywords{relative weak mixing, Rohlin tower, amenable groups}




\maketitle

\section*{Introduction}
In chapter 19 of Halmos' seminal book \cite{Ha-56}, entitled ``Category", he proves what he calls
the ``second category theorem" (the original proof was published in \cite{Ha-44}):
\begin{thm}
In the weak topology the set of all weakly mixing transformations is an everywhere dense $G_\delta$.
\end{thm}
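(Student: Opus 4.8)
The arena is the group $G=\mathrm{Aut}(X,\mu)$ of invertible measure preserving transformations of a standard non-atomic probability space $(X,\mu)$, equipped with Halmos' \emph{weak topology}: $T_j\to T$ means $\mu(T_jA\,\triangle\,TA)\to 0$ and $\mu(T_j^{-1}A\,\triangle\,T^{-1}A)\to 0$ for every measurable $A$. This makes $G$ a Polish group; a basic neighborhood of $T$ is determined by finitely many sets $A_1,\dots,A_m$ and an $\varepsilon>0$, and the Koopman map $T\mapsto U_T$ ($U_Tf=f\circ T^{-1}$) is a homeomorphism of $G$ onto its image in the unitary group of $L^2(\mu)$ taken with the strong operator topology; in particular $T\mapsto\langle U_T^{\,n}f,f\rangle$ is continuous for every $n$ and every $f\in L^2$. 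One must show that the weakly mixing transformations form (i) a $G_\delta$ and (ii) a dense subset of $G$.

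For (i) I would use the characterisation that $T$ is weakly mixing iff $U_T$ has no eigenvector in $L^2_0=\{f\in L^2(\mu):\int f\,d\mu=0\}$. Combining the spectral theorem with Wiener's theorem (for ``weakly mixing $\Rightarrow$'') and the trivial remark that an eigenvector $h$ forces $|\langle U_T^{\,n}h,h\rangle|\equiv\|h\|^2$ (for the converse), this is equivalent to
\[
\liminf_{N\to\infty}\frac1N\sum_{n=0}^{N-1}\bigl|\langle U_T^{\,n}f,f\rangle\bigr|=0\qquad\text{for every }f\in L^2_0 ,
\]
and a routine approximation estimate shows it suffices to test this for $f$ in a fixed countable $\|\cdot\|_2$-dense subset $D\subset L^2_0$. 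For fixed $f$ the displayed condition is exactly $\bigcap_{k\ge1}\bigcup_{N\ge1}\{T:\tfrac1N\sum_{n<N}|\langle U_T^{\,n}f,f\rangle|<1/k\}$, a $G_\delta$ set — each constituent being open by the continuity recorded above — so intersecting over $f\in D$ exhibits the weakly mixing transformations as a $G_\delta$.

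For (ii), the substantive half, I would fix once and for all a single weakly mixing $T_0$ — say the $(\tfrac12,\tfrac12)$-Bernoulli shift, which is even mixing — and prove that its conjugacy class $\{RT_0R^{-1}:R\in G\}$ is weakly dense in $G$; since weak mixing is a conjugacy invariant, (ii) follows. This density statement is the Conjugacy Lemma, proved from Rohlin's lemma. First, the aperiodic transformations are weakly dense in $G$: given a target $S$ and sets $A_1,\dots,A_m$, modify $S$ on its periodic part, separately within each atom of the finite partition generated by the $A_i$, to get an aperiodic transformation sending each $A_i$ to exactly $SA_i$; so we may assume $S$ aperiodic. Now fix $n$ large and $\delta$ small and apply Rohlin's lemma to both $S$ and $T_0$, obtaining towers of common height $n$ with error sets of measure $<\delta$; discarding part of the larger base, arrange that the two bases $\tilde B$ (for $S$) and $B_0$ (for $T_0$) have equal measure and the two error sets have equal measure. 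Pick any measure preserving bijection $R|_{B_0}\colon B_0\to\tilde B$, extend it up the towers by the forced rule $R|_{T_0^{\,j}B_0}=S^{\,j}\circ R|_{B_0}\circ T_0^{-j}$ ($0\le j<n$), and extend over the two equal-measure error sets by any measure preserving bijection. A short computation then gives $RT_0R^{-1}=S$ on the bottom $n-1$ levels of the $S$-tower, so $RT_0R^{-1}$ and $S$ differ only on the top level together with the error set, a set of measure $<1/n+\delta$; choosing $n$ and $\delta$ appropriately places $RT_0R^{-1}$ in the prescribed neighborhood of $S$.

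The main obstacle is the Conjugacy Lemma — this Rohlin-tower matching: aligning the heights and making the base and error measures of the two towers agree, noticing that $R$ is then forced up the tower and that this forces $RT_0R^{-1}$ to coincide with $S$ off the top level, and bounding the residual discrepancy. The reduction to aperiodic targets is soft, and the only genuine analytic input to (i) is the appeal to Wiener's theorem, which is what lets one replace the defining ``$\lim=0$'' condition by a bona fide $G_\delta$ (``$\liminf=0$'') condition. Assertions (i) and (ii) together are precisely the statement that the weakly mixing transformations are a dense $G_\delta$.
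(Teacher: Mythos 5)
Your proposal is correct, and its overall architecture is exactly the classical one behind the theorem the paper quotes from Halmos \cite{Ha-44}, \cite{Ha-56} --- and also the architecture the paper itself uses for the relative Theorem \ref{main}: show the weakly mixing transformations form a $G_\delta$, and get density from the density of the conjugacy class of a single weakly mixing transformation, which rests on Rohlin's lemma. The differences are in how the two ingredients are implemented. For the $G_\delta$ part, Halmos (and the paper's relative Lemma \ref{wm-Gdel}) works with Ces\`aro averages of correlations such as $\mu(T^nA\cap B)-\mu(A)\mu(B)$ over a countable dense family of sets, whereas you use the spectral characterization (no eigenvector in $L^2_0$) plus Wiener's theorem to replace the defining ``$\lim=0$'' by ``$\liminf=0$''; both are standard, and your reduction to a countable dense set of $f$ and the openness of each constituent set are fine. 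For density, Halmos first approximates an arbitrary target weakly by a distinguished dense class (cyclic dyadic permutations) and then uses a single Rohlin tower for the transformation being conjugated --- the same pattern as the paper's Proposition \ref{dense}, whose dense class is the piecewise constant skew products --- while you reduce to aperiodic targets and match two Rohlin towers of equal height, base measure and error measure; this two-tower version of the conjugacy lemma is correct as you state it (the extension of $R$ up the tower is forced, $RT_0R^{-1}=S$ off the top level and error set, and a discrepancy of measure $<1/n+O(\delta)$ gives $\mu(RT_0R^{-1}A\,\triangle\,SA)$ small, hence membership in the weak neighborhood). The one step you pass over quickly is the weak density of aperiodic transformations: your sketch is right but, to make it airtight, refine each period-$k$ column by the partition generated by $A_1,\dots,A_m$ into pure columns and alter only the return map of each pure column by an aperiodic twist of its base; this keeps $S'A_i=SA_i$ exactly and renders $S'$ aperiodic (alternatively, simply cite the classical fact that the aperiodic transformations are weakly dense). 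With that made explicit, the argument is complete.
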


Only later researchers in ergodic theory got interested in properties of {\bf extensions} of 
measure preserving tr­ansformations and their structure (see e.g. 
\cite{Th-75}, \cite{Z-76a, Z-76b} and \cite{Fur-77}, \cite{Ru-79}). 
Surprisingly, as far as we know, the natural question, whether Halmos' theorem holds for extensions,
did not receive any consideration. The recent result of M. Schnurr \cite{Sch-17} 
(following a question of Tao) caught our attention and in the present note we
extend Halmos' result to the collection of 
measure preserving transformations which are extensions of a fixed, but arbitrary,
aperiodic transformation $T_0$.

Let $\nu$ denote the Lebesgue measure on the unit interval $I$.
Let $\Aut(\nu)$ denote the Polish group of measure preserving automorphisms
of the standard Lebesgue space $(I, \mathcal{B}_0, \nu)$.
We let $\mu = \nu \times \nu$ be the product measure on the space $X = I \times I$.
Sometimes we also write $(X, \mathcal{B}, \mu) = (Z, \mathcal{B}_0, \nu) \times 
(Y, \mathcal{B}_1, \nu)$, where $Z = I =Y$ and $\mathcal{B}$ and $\mathcal{B}_1$ are
the corresponding algebras of measurable sets.

Let $T_0 \in \Aut(\nu)$  be a fixed aperiodic transformation.
On $(X,\mathcal{B}, \mu)$ let $\mathcal{A}$ denote the $\sig$-algebra of sets of the form $A \times I,
\ A \in \mathcal{B}_0$.
Let 
$$
\mathcal{M} = \{T \in \Aut(\mu) : T(A \times I) = T_0A \times I\ {\text{for all}} \ A \in \mathcal{B}_0\},
$$
a closed subset of the Polish group $\Aut(\mu)$.
Denote by $G$ the closed subgroup of $\Aut(\mu)$ consisting of transformations
$S \in \Aut(\mu)$ such that $S \rest \mathcal{A}   = \Id$. 
Clearly $\mathcal{M}$ is invariant under conjugation by elements of $G$.

Let 
$$
X \underset{Z}{\times} X =
\{((z,y),(z,y')) : z, y, y' \in I\}
$$
and let 
$$
\la  = \int_{z \in I} (\del_z \times \nu) \times (\del_z \times \nu) \, d\nu(z),
$$
denote the {\em relatively independent product measure} $\mu \underset{Z}{\times} \mu$,
which is supported on $X \underset{Z}{\times} X$.

\begin{defn}
An element $T \in \mathcal{M}$ is {\bf relatively weakly mixing over} $T_0$ when 
the measure $\la$ is $T \times T$-ergodic.
\end{defn}

We can now state our main theorem.

\begin{thm}\label{main}
The collection $\mathcal{W}$ 
of measure preserving transformations in $\mathcal{M}$, which are weakly mixing relative 
to $T_0$, is a dense $G_\del$ subset of $\mathcal{M}$.
\end{thm}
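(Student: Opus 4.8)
The plan is to mimic Halmos' original category argument in the relative setting, using a relative version of Rohlin's lemma as the source of approximation. First I would show that $\mathcal{W}$ is a $G_\delta$ subset of $\mathcal{M}$. For this, note that $T$ is relatively weakly mixing over $T_0$ iff the measure $\lambda$ is $T\times T$-ergodic on $X\underset{Z}{\times}X$; equivalently, for every pair of sets $B,C$ in a fixed countable algebra generating $\mathcal{B}\otimes\mathcal{B}$ restricted to $X\underset{Z}{\times}X$, the Cesàro averages $\frac1N\sum_{n<N}\lambda((T\times T)^{-n}B\cap C)$ converge to $\lambda(B)\lambda(C)$. Each such condition, phrased with rationals $\varepsilon$ and tails in $N$, is an open condition in $T$ (the map $T\mapsto \lambda((T\times T)^{-n}B\cap C)$ is continuous on $\mathcal{M}$), so ergodicity is a countable intersection of countable unions of such conditions, giving a $G_\delta$. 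One should be slightly careful to use a criterion for ergodicity of $\lambda$ that is genuinely a $G_\delta$ condition — the cleanest is that $\lambda$ is ergodic iff $\lambda$ is an extreme point, or iff the ergodic averages of a countable dense family of functions converge to the space average $\lambda$-a.e.; I would phrase it via $L^2$ mean convergence of ergodic averages, which is manifestly $F_{\sigma\delta}$... actually the standard trick is: $T\times T$ is ergodic on $\lambda$ iff for all $f$ in a countable dense set, $\liminf_N \|\frac1N\sum_{n<N} f\circ (T\times T)^n - \int f\,d\lambda\|_2 = 0$, and since $\liminf$ of continuous functions being $\le 1/k$ is a $G_\delta$ in $T$, we are done.

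Second, and this is the heart of the matter, I would prove that $\mathcal{W}$ is dense in $\mathcal{M}$. Halmos' approach is: given $T\in\mathcal{M}$ and a weak neighborhood of $T$, one perturbs $T$ to a periodic-like transformation built on a Rohlin tower, then inside that combinatorial skeleton one can realize an explicit weakly mixing (in fact, a transformation whose relatively independent self-joining is ergodic) model by a suitable "stacking with independent cyclic permutations" construction. In the relative setting the key input is a relative Rohlin lemma: since $T_0$ is aperiodic, for any $T\in\mathcal{M}$, any $\varepsilon>0$ and any $n$, there is a set $F\in\mathcal{A}$ (a base that is $\mathcal{A}$-measurable, hence of the form $A_0\times I$) with $F,TF,\dots,T^{n-1}F$ disjoint and covering all but $\varepsilon$ of $X$; this holds because $T$ restricted to $\mathcal{A}$ is just $T_0$, which is aperiodic, so Rohlin's lemma applies downstairs and lifts. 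On this tower one then modifies $T$ only within the fibers over $Z$: one replaces the fiberwise identification maps $T:\{z\}\times I\to\{T_0 z\}\times I$ at the top of the column by a fresh rearrangement chosen so that the resulting skew-product-like map has its relatively independent self-joining ergodic. Concretely I would take the tower to have height a large prime $p$, glue the levels cyclically, and on the fibers install the "independent uniform" behavior — the same mechanism by which a generic skew product over a rotation is relatively weakly mixing — and verify that the modification can be made arbitrarily small in the weak topology by taking $\varepsilon\to 0$ and $p\to\infty$. The output stays in $\mathcal{M}$ because we never touched the $\mathcal{A}$-action.

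The main obstacle I expect is the density step, specifically producing, inside the Rohlin tower, an explicit element of $\mathcal{M}$ that is provably relatively weakly mixing and $\varepsilon$-close to the given $T$. The cleanest route is probably not to build such an element by hand but to argue: (i) the conjugacy class $\{gTg^{-1} : g\in G\}$ of any \emph{one} relatively weakly mixing $T^{*}\in\mathcal{M}$ is dense in $\mathcal{M}$ — this follows from a relative Rohlin/conjugacy lemma in the spirit of the classical "periodic approximation + conjugation" argument, using that all aperiodic extensions of $T_0$ look alike combinatorially on towers — and (ii) relative weak mixing is a conjugation-invariant property within $\mathcal{M}$ (immediate, since conjugating $T$ by $g\in G$ conjugates $T\times T$ by $g\times g$, which preserves $\lambda$). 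So it suffices to exhibit a single $T^{*}\in\mathcal{M}$ that is relatively weakly mixing over $T_0$; one may take, e.g., a relatively Bernoulli extension of $T_0$, or invoke that a generic (in the sense already used by Ornstein–Weiss-type results, or directly Schnurr's construction) skew product works. Then density of $\mathcal{W}$ follows since $\mathcal{W}\supseteq \{gT^{*}g^{-1} : g\in G\}$, which is dense. Combining the two halves, $\mathcal{W}$ is a dense $G_\delta$ in $\mathcal{M}$, which is the assertion of Theorem~\ref{main}. I would then remark that the passage to general countable amenable groups is carried out afterwards via the quoted Ornstein–Weiss machinery and is not part of this argument.
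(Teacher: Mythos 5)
Your final plan is essentially the paper's own argument: a $G_\delta$ statement proved by writing relative weak mixing as countably many open conditions on Ces\`aro averages (this is Lemma \ref{wm-Gdel}), plus density obtained from (i) conjugation--invariance of relative weak mixing under the fiber group $G$ and (ii) density of the $G$-conjugacy orbit of a single relatively weakly mixing witness; the paper's witness is $W=T_0\times W_0$ with $W_0$ weakly mixing, which matches your ``relatively Bernoulli'' suggestion. Two caveats. First, the one substantive step you do not actually carry out is precisely the heart of the paper, Proposition \ref{dense}: that for $T\in\mathcal{M}$ the orbit $\{QTQ^{-1}:Q\in G\}$ is dense in $\mathcal{M}$. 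This is not just the classical Halmos conjugacy lemma ``relativized by analogy,'' because the conjugating map must act only in the fibers (it must fix $\mathcal{A}$), so one cannot freely permute tower levels as in the absolute case. The paper fills this in by first proving Lemma \ref{pw-dense} (every $T\in\mathcal{M}$ is approximated by piecewise constant skew products $R_{\{A_j,R_j\}}$, via regularity of the push-forward of $\nu$ under $z\mapsto\tau_z$), and then, over a Rohlin tower for $T_0$ refined by the partition $\{A_j\}$, defining the fiber cocycle $\kappa_z$ recursively up the tower so that $Q T Q^{-1}$ agrees with the target $R$ on all but the top level and the residual set; your plan gestures at a tower construction but does not identify this reduction to piecewise constant skew products nor the recursive cocycle, which is where the actual work lies. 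Second, your fallback of ``invoking that a generic skew product works'' to produce the witness would be circular (that genericity is what is being proved); you should commit to the explicit witness $T_0\times W_0$ (or $T_0\times{}$a Bernoulli shift), as the paper does. With those two points supplied, your outline is the paper's proof.
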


In section \ref{sec-Z} we prove Theorem \ref{main}.
In section \ref{sec-A} we apply a general argument regarding extensions, originated in
\cite{RW-00}, and then use a result of Ornstein and Weiss \cite{OW-80} (for proofs see
\cite{CFW-81}), to extend the relative theorem to the general
(countable) amenable group.

We thank J.-P. Thouvenot and Michael Schnurr for suggesting the present formulations of 
Proposition \ref{dense} and Theorem \ref{main}, which are now slightly stronger than the original ones
(see \cite{GW-18}).

\section{The relative theorem for $\mathbb{Z}$-actions}\label{sec-Z}

%

We first note that, by a theorem of Rohlin, we can write every $T \in \mathcal{M}$ as a skew product over $T_0$:
$$
T(z,y) = (T_0z, \tau_z(y)),
$$
where $z \mapsto \tau_z$ is a measurable map from $Z$ into the Polish group $\Aut(\nu)$
(see e.g. \cite[Theorem 3.18]{Gl-03}).

\begin{defn}
Given a measurable partition $\{A_0, A_1, \dots, A_k\}$ of $I$
and a finite set $\{R_0, R_1,\dots,R_k\}$ of elements of $\Aut(\nu)$,
define a {\bf piecewise constant skew product over} $T_0$, $R \in 
\mathcal{M} \subset \Aut(\mu)$ by
the formula 
$$
R(z,y) = (T_0z, \rho_z(y)),
$$ 
where for each $j$, $\rho_z$ is the constant transformation $R_j$
on the cell $A_j$. 
\end{defn}

\begin{lem}\label{pw-dense}
The collection of piecewise constant skew products over $T_0$ is a dense subset of $\mathcal{M}$.
\end{lem}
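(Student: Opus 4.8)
The plan is to approximate an arbitrary $T \in \mathcal{M}$ in the weak topology of $\Aut(\mu)$ by piecewise constant skew products over $T_0$. Write $T$ as a skew product $T(z,y) = (T_0 z, \tau_z(y))$ with $z \mapsto \tau_z$ a measurable map from $Z=I$ into the Polish group $\Aut(\nu)$, as provided by the Rohlin theorem quoted just above. The weak topology on $\Aut(\mu)$ is metrizable, and since $T$ and the target $R$ agree on the factor $\mathcal{A}$ (both project to $T_0$), convergence $R \to T$ in $\Aut(\mu)$ can be tested on a countable family of sets of the form $B \times C$, or equivalently via the pseudometrics $d(S,S') = \mu(S E \triangle S' E) + \mu(S^{-1} E \triangle S'^{-1} E)$ for $E$ ranging over such a family. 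So it suffices, given finitely many measurable rectangles and $\varepsilon > 0$, to produce a piecewise constant skew product $R$ that is $\varepsilon$-close to $T$ on those sets.

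The key step is a measurable-selection / density argument for the cocycle. Since $\Aut(\nu)$ with its Polish topology is separable, fix a countable dense set $\{R_j\}$ in $\Aut(\nu)$. For the given tolerance, cover $\Aut(\nu)$ by countably many Borel sets $U_j$ of small diameter (in a metric compatible with the weak topology on $\Aut(\nu)$, e.g. $U_j$ the points within $\varepsilon$ of $R_j$ minus the earlier ones), and set $A_j = \{ z \in I : \tau_z \in U_j \}$. This $\{A_j\}$ is a countable measurable partition of $I$; truncating to finitely many cells $A_0, \dots, A_{k-1}$ carrying all but $\varepsilon$ of the mass of $\nu$, and lumping the remainder into a last cell $A_k$ (with $R_k$ arbitrary, say the identity), we obtain a finite measurable partition and a finite set $\{R_0,\dots,R_k\} \subset \Aut(\nu)$. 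Define $R(z,y) = (T_0 z, \rho_z(y))$ with $\rho_z \equiv R_j$ on $A_j$. Then for a.e. $z$ in the ``good'' set $\bigcup_{j<k} A_j$ we have $\rho_z$ within $\varepsilon$ of $\tau_z$ in $\Aut(\nu)$.

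It remains to turn the fiberwise closeness into closeness of $R$ and $T$ as elements of $\Aut(\mu)$. For a rectangle $E = B \times C$ with $B \in \mathcal{B}_0$, $C \in \mathcal{B}_1$, one computes
$$
\mu(TE \triangle RE) = \int_I \nu\big(\tau_z^{-1}(C) \triangle \rho_z^{-1}(C)\big)\, \ch_{T_0^{-1}B}(z)\, d\nu(z),
$$
and the integrand is at most $2$ always, and is small whenever $z$ lies in the good set and $C$ lies in the fixed finite family, by the chosen diameter bound on the $U_j$ (note that $\nu$-closeness in $\Aut(\nu)$ precisely controls $\nu(\tau_z^{-1} C \triangle \rho_z^{-1} C)$ for the finitely many relevant $C$). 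Hence the integral is at most $2\varepsilon + \varepsilon$; the same estimate applies to $T^{-1}$ versus $R^{-1}$ and to finite unions of rectangles, which generate $\mathcal{B}$. This gives $d(T,R) \to 0$ as $\varepsilon \to 0$, proving density.

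The main obstacle is the middle step: making the passage from ``$\tau_z \in \Aut(\nu)$ measurable in $z$'' to a genuine \emph{finite} piecewise constant cocycle uniformly approximating it, i.e. choosing the diameter of the pieces $U_j$ small enough \emph{in the right metric} — the one that controls $\nu(\tau_z^{-1} C \triangle \rho_z^{-1} C)$ for all $C$ in the prescribed finite list simultaneously — and simultaneously controlling the exceptional set where no good approximation is available. Everything else (metrizability of the weak topology, reduction to rectangles, the Fubini computation) is routine.
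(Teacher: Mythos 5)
Your proposal is correct and takes essentially the same route as the paper: discretize the measurable cocycle $z \mapsto \tau_z$ by partitioning $\Aut(\nu)$ into small-diameter pieces, replace $\tau_z$ by a fixed representative on each piece, and absorb a small exceptional set of $z$'s; the only difference is that you use separability (a countable cover of $\Aut(\nu)$ plus truncation to finitely many cells of large total mass) where the paper uses regularity of the push-forward measure $\tau_*\nu$ to get a compact set cut into finitely many small cells. Your explicit fiberwise verification (note the displayed integral is really $\mu(T^{-1}E \triangle R^{-1}E)$; the forward images give the analogous formula with $\tau_z C$, $\rho_z C$ and $\ch_B$) merely spells out what the paper compresses into ``Clearly $T \overset{\ep}{\sim} R$'', and the ``obstacle'' you flag is resolved by the routine triangle inequality $\nu(SC \triangle S'C) \le \nu(SD \triangle S'D) + 2\nu(C \triangle D)$ for $D$ in the countable family defining the metric.
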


\begin{proof}
Let $d$ be a compatible metric on $\Aut(\nu)$. 
An arbitrary element $T \in \mathcal{M}$ has the form
$$
T(z, y) = (T_0z, \tau_zy),
$$
where $\tau : [0,1] \to \Aut(\nu)$ is a measurable mapping. 
Denote by $\tilde{\nu}$ the push-forward measure $\tilde{\nu}=\tau_*\nu = \nu \circ \tau^{-1}$.
This is a regular measure on $\Aut(\nu)$ hence, given $\ep >0$, there is a compact set 
$K \subset \Aut(\nu)$ with $\tilde{\nu}(K) > 1-\ep$.
Let $\{C_i\}_{i=1}^k$ be a partition of $K$ into sets with diameter $< \ep$
and set $C_0 = \Aut(\nu) \setminus K$.
Let $A_i = \tau^{-1}(C_i), \ 0 \le i \le k$.
Define $R_0 = \Id$ and for each $1 \le i \le k$ choose some $R_i \in C_i$.
Finally let $R = R_{\{A_i, R_i\}}$ be defined by 
$$
R(z,y) = (T_0z, \rho_z(y)),
$$ 
where for each $i$, $\rho_z$ is the constant transformation $R_i$
on the cell $A_i$. Clearly $T \overset{\ep}{\sim} R$.
\end{proof}

We can now prove a relative analogue of Halmos' conjugation theorem for aperiodic transformations:

\begin{prop}\label{dense}
For every element $T \in \mathcal{M}$, its orbit under conjugations with elements of $G$ is dense in $\mathcal{M}$.
\end{prop}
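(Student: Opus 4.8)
The plan is to combine Lemma \ref{pw-dense} with an explicit ``fibered'' conjugation built over a tall Rohlin tower for the base transformation $T_0$. By Lemma \ref{pw-dense} the piecewise constant skew products are dense in $\mathcal{M}$, so it suffices to fix $\ep>0$ and a piecewise constant skew product $R(z,y)=(T_0z,\rho_z(y))$ — with $\rho_z=R_i$ on a cell $A_i$ of a finite measurable partition of $I$ — and produce $S\in G$ with $STS^{-1}\overset{\ep}{\sim}R$. First I would write $T$ as a skew product $T(z,y)=(T_0z,\tau_z(y))$ over $T_0$ (Rohlin's theorem), and record that, by the disintegration of $\mu$ over $\mathcal{A}$, every $S\in G$ has the form $S(z,y)=(z,\sigma_z(y))$ with $z\mapsto\sigma_z\in\Aut(\nu)$ measurable. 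A one-line computation then gives
$$
STS^{-1}(z,y)=\bigl(T_0z,\ \sigma_{T_0z}\,\tau_z\,\sigma_z^{-1}(y)\bigr),
$$
so the whole problem reduces to choosing the measurable family $\sigma$ so that the fiber cocycle $z\mapsto\sigma_{T_0z}\,\tau_z\,\sigma_z^{-1}$ coincides with $z\mapsto\rho_z$ off a set of small $\nu$-measure.

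To do this I would take $N$ large and $\del$ small and use that $T_0$ is aperiodic to get, by Rohlin's lemma, a base $B$ with $B,T_0B,\dots,T_0^{N-1}B$ pairwise disjoint and $\nu\bigl(\bigcup_{j=0}^{N-1}T_0^jB\bigr)>1-\del$, so that $\nu(B)\le 1/N$. Then I would define $\sigma$ by induction up the tower: $\sigma_z=\Id$ on $B$ (and on the complement of the tower), and $\sigma_{T_0w}=\rho_w\,\sigma_w\,\tau_w^{-1}$ for $w$ in the levels $T_0^jB$, $0\le j\le N-2$. This determines $\sigma$ on all of $I$, and it is measurable because $B$, $T_0$, $T_0^{-1}$, $\rho$ and $\tau$ are measurable and the group operations of the Polish group $\Aut(\nu)$ are Borel. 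Hence $S(z,y)=(z,\sigma_z(y))$ lies in $G$, and $STS^{-1}\in\mathcal{M}$ since $\mathcal{M}$ is invariant under $G$-conjugation. By the very definition of $\sigma$, for $z$ in the levels $0$ through $N-2$ one has $\sigma_{T_0z}\,\tau_z\,\sigma_z^{-1}=\rho_z$, so $STS^{-1}$ and $R$ agree on $\bigcup_{j=0}^{N-2}T_0^jB$, a set of measure $>1-\del-1/N$.

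Finally I would invoke the standard fact that two measure preserving transformations agreeing off a set of measure $\eta$ satisfy $\mu(UA\triangle VA)\le 2\eta$ for every measurable $A$, and therefore lie within $O(\eta)$ of each other in the weak topology; choosing $N$ large and $\del$ small then yields $STS^{-1}\overset{\ep}{\sim}R$, which proves that the $G$-orbit of $T$ is dense in $\mathcal{M}$.

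I do not expect a serious obstacle here: the conceptual heart is simply the observation that conjugation inside $G$ alters the fiber cocycle $\tau$ only by the coboundary $\sigma_{T_0z}(\,\cdot\,)\sigma_z^{-1}$, and that this coboundary equation can be solved \emph{exactly} up a Rohlin tower for the aperiodic base, the top level together with the tower's complement being the only (small, controllable) source of error. The points that need a little care are the measurability of the inductively constructed $\sigma$ and the routine passage from ``agreement off a small set'' to closeness in the weak topology. (Lemma \ref{pw-dense} is not strictly necessary — the same construction works for an arbitrary measurable $\rho$ — but restricting to piecewise constant targets keeps the bookkeeping transparent.)
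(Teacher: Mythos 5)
Your proposal is correct and is essentially the paper's own argument: reduce via Lemma \ref{pw-dense} to a piecewise constant target, then solve the coboundary equation $\sigma_{T_0z}\tau_z\sigma_z^{-1}=\rho_z$ exactly up a Rohlin tower for the aperiodic $T_0$, starting from $\Id$ on the base, so that the conjugate agrees with $R$ except on the top level and the tower's complement. The only cosmetic difference is that the paper first refines the tower by the partition $\{A_j\}$ so the cocycle value is constant on each level, a bookkeeping step your formulation (and your remark that piecewise constancy is not really needed) bypasses.
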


\begin{proof}
We write $T$ as a skew product over $T_0$:
$$
T(z,y) = (T_0z, \tau_z(y)).
$$

By  Lemma \ref{pw-dense} it suffices to show that given
$R = R_{\{A_j, R_j\}}$, a piecewise constant skew product over $T_0$, and $\ep > 0$,
there is some $Q \in G$ such that, with  
$V = Q T Q^{-1}$ we have
$V \overset{\ep }{\sim} R$.

Fix $n$ so that $\frac{1}{n} < \ep/2$.
Let $\{B, T_0B, \dots, T^{n-1}B\}$ be a Rohlin tower for $T_0$ with
$\nu(I \setminus \bigcup_{i=0}^{n-1}T_0^i B) < \ep/2$
(see e.g. \cite{Ha-56}).


We refine the tower with respect to the partition $\{A_j : 1 \le j \le k\}$. 
This partitions $B$ into subsets $\{B_l : 1 \le l \le L\}$,
such that for each $l$ and each $i$ there is some $\al(l,i)$ with $T^iB_l \subset A_{\al(l,i)}$.

Next, define $Q \in G$,
$$
Q(z,y) = (z, \kappa_z(y)),
$$ 
as follows:

\begin{itemize}
\item
For each $l$, 
set ${\kappa^{(l)}}_z = \Id$ on $B_l$, 
\item 
for $i= 1,\dots, n -2$, let ${\kappa^{(l)}}_z \in \Aut(\nu)$ satisfy 
${\kappa^{(l)}}_{T_0z} \tau_z ({{\kappa^{(l)}}_z})^{-1} = R_{\al(l,i)}$, for $z \in T_0^i B_l,\ i =0,1,\dots, n-2$, 
\item
${\kappa^{(l)}}_{T_0 z} \tau_z  ({{{\kappa^{(l)}}}_z})^{-1} = R_0$ for $z \in T_0^{n-1} B_l$.
\end{itemize}

Thus
\begin{gather*}
\kappa^{(l)}_z  = \Id, \quad z \in B_l \\
\kappa^{(l)}_{T_0^{i+1}z} =  R_{\al(l,i)} {\kappa^{(l)}}_{T_0^{i}z} {(\tau_{T_0^i z})}^{-1}, \quad
z \in B_l, \ i=1, \dots n-1,
\end{gather*}
so that $Q$ is defined by
\begin{gather*}
Q(z, y) = (z, \kappa^{-1}_z y) \quad {\text{for}}\quad  z \in T_0^i B_l, \quad 0 \le i \le n-1,
\quad 1 \le l \le L \\
Q(z, y) = (z, y), \quad {\text{for}} \quad z \in I \setminus \bigcup_{i=0}^{n-1}T_0^i B.
\end{gather*}
Set $V = Q T Q^{-1}$.
It is now easy to check that $V \overset{\ep }{\sim} R$, and we are done.
\end{proof}

%

\begin{lem}\label{wm-Gdel}
The collection $\mathcal{W}$ consisting of transformations which are relatively 
weakly mixing over $T_0$ is a $G_\del$ subset of $\mathcal{M}$.
\end{lem}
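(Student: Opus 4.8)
The plan is to exhibit $\Wcal$ as a countable intersection of open sets by expressing the failure of ergodicity of $\la$ under $T\times T$ in terms of countably many ``almost-invariance'' conditions, each of which is an open condition on $T\in\Mcal$.

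First I would fix a countable family of test functions. Since $(X\underset{Z}{\times}X,\la)$ is a standard Lebesgue space, $L^2(\la)$ is separable; choose a countable dense set $\{f_m\}_{m\ge1}$ in the unit ball of $L^2(\la)$, and to make the key estimate genuinely open one takes the $f_m$ among a convenient dense class — e.g. functions of the form $g(z,y)h(z,y')$ with $g,h$ bounded, or even continuous functions on $I^3$ — so that $T\mapsto f_m\circ(T\times T)$ is norm-continuous on $\Mcal$ (this uses that $T\mapsto U_T$ is continuous into the strong operator topology, together with boundedness to upgrade to $L^2$). Recall that $T$ is relatively weakly mixing iff $\la$ is $T\times T$-ergodic, which for this (finite, non-ergodic-on-the-base-factor) measure means: the only $(T\times T)$-invariant functions in $L^2(\la)$ are those measurable with respect to the diagonal factor algebra $\{D\times I : D\in\mathcal B_0\}$ pulled back by $(z,y,y')\mapsto z$ — equivalently the $T\times T$-invariant functions coincide with the $T_0$-invariant functions of $z$.

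Next I would encode non-ergodicity. Let $P$ denote the orthogonal projection in $L^2(\la)$ onto the closed subspace $\mathcal H_0$ of functions depending only on $z$; this $P$ does not depend on $T$. Then $T\in\Wcal$ fails precisely when there exists $f\in L^2(\la)$ with $\|f-Pf\|=1$ (say) and $f$ is $(T\times T)$-invariant. By a standard mean-ergodic-theorem argument, invariance of some such $f$ is equivalent to: for every $\epsilon>0$ there is an $m$ with $\|f_m - Pf_m\|>1/2$ and $\|(I - A_N)(f_m)\|<\epsilon$ for all large $N$, where $A_N = \frac1N\sum_{j=0}^{N-1}U_{T\times T}^{\,j}$ — more precisely one shows
$$
T\in\Wcal \iff \forall\, m,\ \text{with } \|f_m - Pf_m\|>\tfrac12,\quad \|A_N(f_m - Pf_m)\| \xrightarrow[N\to\infty]{} 0.
$$
Thus the complement $\Mcal\setminus\Wcal$ is
$$
\bigcup_{m:\,\|f_m-Pf_m\|>1/2}\ \bigcap_{j\ge1}\ \Big\{T\in\Mcal : \|A_N(f_m-Pf_m)\| > \tfrac1j \ \text{for all } N\Big\},
$$
and after re-examining one checks each inner set is closed (each condition $\|A_N(\cdot)\|\ge 1/j$ is closed by continuity of $T\mapsto A_N(f_m-Pf_m)$, and an intersection over $N$ of closed sets is closed), so $\Mcal\setminus\Wcal$ is a countable union of closed sets, i.e. $F_\sigma$, hence $\Wcal$ is $G_\delta$.

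The main obstacle is getting the continuity/openness bookkeeping exactly right: one must choose the dense family $\{f_m\}$ inside a class on which $T\mapsto U_{T\times T}f_m$ is norm-continuous (continuity of $T\mapsto U_T$ is only strong-operator a priori, so one restricts to bounded $f_m$ and uses dominated convergence), and one must verify that restricting the quantifier ``$\exists f$'' to the countable family $\{f_m\}$ genuinely captures ergodicity — this is where density of $\{f_m\}$ and the contraction property of $A_N$ are used, and a small approximation argument is needed to pass from an arbitrary invariant $f$ to a nearby $f_m$ that is ``almost invariant''. Once this is arranged, the $G_\delta$ conclusion is immediate from the displayed $F_\sigma$ description of the complement. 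I expect no difficulty beyond this; density of $\Wcal$ is handled separately using Proposition \ref{dense}.
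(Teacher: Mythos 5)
Your overall strategy is the same as the paper's: test the ergodic averages $A_N$ on a countable family of (bounded, nicely chosen) functions, note that each condition ``$\|A_N(\cdot)\|<1/j$'' is open in $T$, and exhibit $\mathcal{W}$ as a countable $\bigcap\bigcap\bigcup$ of such open sets; the paper does this directly with the test functions $\ch_{A_i}\otimes\ch_{A_j}$ for a countable dense family $\{A_i\}$ of measurable sets, writing $\mathcal{W}=\bigcap_{i,j}\bigcap_k\bigcup_N\mathcal{E}_{A_i,A_j,\frac1k,N}$. However, your displayed $F_\sigma$ description of the complement is wrong as written: since $\|A_N(f_m-Pf_m)\|\le\|f_m\|\le 1$, the $j=1$ member of your intersection, $\{T:\|A_N(f_m-Pf_m)\|>1\ \text{for all }N\}$, is empty, hence the whole right-hand side is empty and the formula would assert $\mathcal{W}=\mathcal{M}$. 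The quantifier on $j$ must be existential (the negation of $\|A_N(\cdot)\|\to0$ is that some threshold $1/j$ is exceeded along a subsequence), and to retain closed sets with ``for all $N$'' you should invoke the witness argument you already sketch: given a $T\times T$-invariant $g$ that is not $z$-measurable, put $f=(I-P)g/\|(I-P)g\|$ (invariant, since $P$ commutes with $U_{T\times T}$, and $Pf=0$), choose $f_m$ with $\|f_m-f\|<\tfrac14$, and then $\|A_N(f_m-Pf_m)\|\ge \|f\|-\|f_m-f\|>\tfrac34$ for \emph{every} $N$; thus $\mathcal{M}\setminus\mathcal{W}=\bigcup_m\{T:\|A_N(f_m-Pf_m)\|\ge\tfrac12\ \text{for all }N\}$ is a countable union of closed sets, or, more directly, $\mathcal{W}=\bigcap_m\bigcap_j\bigcup_N\{T:\|A_N(f_m-Pf_m)\|<\tfrac1j\}$, which is exactly the paper's formula. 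Your care about continuity (bounded test functions so that $T\mapsto U_{T\times T}f_m$ is $L^2(\lambda)$-norm continuous) is a genuine plus --- the paper dismisses this with ``clearly open'' --- though note that products $g(z,y)h(z,y')$ alone are not dense, so take (rational) linear combinations of them.

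A second, smaller discrepancy: you project onto all $z$-measurable functions ($P$), i.e.\ you encode \emph{relative} ergodicity of $\lambda$ over $Z$, whereas the paper's Definition, and its formula (whose target is the number $\mathbb{E}(\mathbb{E}(\ch_A|Z)\mathbb{E}(\ch_B|Z))$, not the function $\mathbb{E}(\ch_A|Z)\mathbb{E}(\ch_B|Z)$), uses ergodicity of $\lambda$, i.e.\ projection onto the constants. The two agree exactly when $T_0$ is ergodic --- the only case in which the paper's notion is non-vacuous --- but as stated you prove the $G_\delta$ property for a possibly different set than the one literally defined; either assume $T_0$ ergodic explicitly or change the target of your averages from $Pf_m$ to $\int f_m\,d\lambda$. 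With these repairs the argument is correct and essentially coincides with the paper's proof.
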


\begin{proof}

Given $A, B \in \mathcal{X}$, the $\sig$-algebra of measurable subsets on $X$,
$\ep >0$ and $N \in \N$, set
$$
\mathcal{E}_{A, B, \ep, N} =
\left\{ T \in \mathcal{M} :
\left\| \frac{1}{N} \sum_{n =0}^{N-1} (T^n \times T^n)(\ch_A \otimes \ch_B) -
\mathbb{E}(\mathbb{E}(\ch_A | Z)\mathbb{E}(\ch_B  | Z)) \right\|_{L^2(\la)} < \ep
\right\}.
$$
Clearly each $\mathcal{E}_{A, B, \ep, N}$ is an open set and  
$$
\mathcal{W} = \bigcap_{i, j}\bigcap_k \bigcup_N \mathcal{E}_{A_i, A_j, \frac1k, N}, 
$$
where $\{A_i\}_{i \in \N}$ is a countable dense collection in $\mathcal{X}$.
\end{proof}

%
%
%

We are now in a position to complete the proof of our main theorem.

\begin{proof}[Proof of Theorem \ref{main}]
Let $W_0 \in \Aut(\nu)$ be a fixed weakly mixing transformation
and set $W = T_0 \times W_0$ on $I \times I$.
Note that $W$ is a weakly mixing extension of $T_0$.

By Proposition \ref{dense} the $G$-orbit of $W$ is dense in $\mathcal{M}$.
Since every element of this orbit is weakly mixing relative to $T_0$ (being a conjugate of 
$W = T_0 \times W_0$), and as the collection of
all the transformations which are weakly mixing relative to $T_0$ is a $G_\delta$ subset of $\mathcal{M}$
(by Lemma \ref{wm-Gdel}),
the assertion of the theorem follows.
\end{proof}

\br

Let
$$
\mathcal{N} = \{T \in \Aut(\mu) : T \ {\text{leaves the $\sig$-algebra $\mathcal{A}$ invariant}}\},
$$
and let 
$$
\widetilde{\mathcal{W}} = \{T \in \mathcal{N} : T \ {\text{is a weakly mixing extension of}}\
T' = T \rest \mathcal{A}\}.
$$
As the collection of ergodic transformations is a dense $G_\del$ subset of $\Aut(\nu)$,
Theorem \ref{main}, combined with the Kuratowski-Ulam theorem 
\cite[Theorem 15.4]{Ox-80}, immediately yield
the following result of M. Schnurr \cite{Sch-17}:

\begin{cor}
The collection $\widetilde{\mathcal{W}}$ forms a residual subset of $\mathcal{N}$.
\end{cor}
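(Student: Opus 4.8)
The plan is to deduce the Corollary from Theorem \ref{main} by a standard ``fibering'' argument using the Kuratowski--Ulam theorem, exactly as the remark preceding the statement suggests. The point is that $\mathcal{N}$ is naturally ``fibered'' over $\Aut(\nu)$: to each $T \in \mathcal{N}$ one associates the factor transformation $T' = T\rest\mathcal{A} \in \Aut(\nu)$ (identifying $\mathcal{A}$ with the Borel structure of $(I,\mathcal{B}_0,\nu)$), and the fiber over a given $T_0$ is precisely the set $\mathcal{M} = \mathcal{M}(T_0)$ from the body of the paper. So I would first verify that this identification realizes $\mathcal{N}$ (or a residual subset of it) as a space to which Kuratowski--Ulam applies, then invoke Theorem \ref{main} fiberwise, and finally combine with genericity of ergodicity in the base.

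In more detail, the steps are as follows. First I would record that the map $T \mapsto T'= T\rest\mathcal{A}$ from $\mathcal{N}$ to $\Aut(\nu)$ is continuous, and that $\mathcal{N}$ is a Polish space (a closed, hence $G_\del$, subgroup-coset-like subset of $\Aut(\mu)$). Second, I would note that, since the aperiodic transformations form a dense $G_\del$ in $\Aut(\nu)$ and, by Theorem \ref{main}, for every \emph{aperiodic} $T_0$ the set $\mathcal{W}(T_0)$ of transformations in $\mathcal{M}(T_0)$ weakly mixing relative to $T_0$ is dense $G_\del$ in $\mathcal{M}(T_0)$, and moreover every ergodic transformation is aperiodic, the hypotheses of the fibered category theorem are met on the part of $\mathcal{N}$ lying over the (residual) set of ergodic base transformations. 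Third, I would apply the Kuratowski--Ulam theorem \cite[Theorem 15.4]{Ox-80}: a subset of a product (here, of a space fibered over a base) which is residual in the base and residual in each relevant fiber is residual in the total space. Concretely, $\widetilde{\mathcal{W}}$ is, by definition, the set of $T \in \mathcal{N}$ whose base $T'$ is such that $T$ is a weakly mixing extension of $T'$; intersecting with $\{T : T' \text{ ergodic}\}$ (residual in $\mathcal{N}$, being the preimage of a residual set under a continuous map with suitable fibers) and using that within each such fiber $\mathcal{M}(T')$ the relatively weakly mixing transformations are residual (Theorem \ref{main}), one concludes $\widetilde{\mathcal{W}}$ is residual in $\mathcal{N}$.

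The main obstacle is purely a matter of setting up the fibration so that Kuratowski--Ulam genuinely applies: the space $\mathcal{N}$ is not literally a product $\Aut(\nu) \times \mathcal{M}$, so one must either produce a Borel (ideally continuous on a residual set) section or trivialization $T \mapsto (T', \text{fiber coordinate})$ of the map $T \mapsto T'$, or else invoke the appropriate version of the Kuratowski--Ulam / Ulam--Kuratowski theorem for maps rather than products. This is where the Rohlin skew-product description used throughout Section \ref{sec-Z} does the work: every $T \in \mathcal{N}$ with aperiodic (in particular ergodic) base can be written as $T(z,y) = (T'z, \tau_z y)$ with $z\mapsto\tau_z$ measurable into $\Aut(\nu)$, and this presentation identifies the fiber over $T'$ with $\mathcal{M}(T')$ in a way compatible with the topology, so that the product structure is available up to a residual set. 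Once that identification is in place, the rest is a direct citation of \cite[Theorem 15.4]{Ox-80} together with Theorem \ref{main} and the genericity of ergodicity in $\Aut(\nu)$; I would keep the write-up short and lean on the remark already present in the text.
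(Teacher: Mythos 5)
Your argument is essentially the paper's own: the paper proves this corollary precisely by combining Theorem \ref{main} with the genericity of ergodic transformations in $\Aut(\nu)$ and the Kuratowski--Ulam theorem \cite[Theorem 15.4]{Ox-80}, exactly as you propose. Your additional care about realizing the fibration $T \mapsto T\rest\mathcal{A}$ via the Rohlin skew-product presentation is a reasonable fleshing-out of a step the paper leaves implicit, not a different route.
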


\br

\section{The relative theorem for amenable groups}\label{sec-A}
The result of Halmos, that weak mixing is generic in $\Aut(\nu)$, 
can be extended to any countable amenable group as soon as one establishes that orbits under
conjugation by $\Aut(\nu)$ of free actions are dense. 
In turn, this relies on the
Rohlin lemma (in fact Halmos' original proof used a weaker version
of the lemma which was first stated by Rohlin \cite{Ro-48}). Such a result 
for amenable groups was established in \cite{OW-80}.
It is therefore natural to ask about the analogue of theorem \ref{main}
for actions of an amenable group. The fact that,
for a given amenable group $G$, the 
collection of the relatively weakly mixing extensions of a fixed action forms a 
$G_{\delta}$ can be proved just like we proved lemma \ref{wm-Gdel}.
It thus remains to show that  this collection is dense.
In order to do this we proceed as follows.

\begin{defn}
If $G$ and $H$ are two countable groups acting as measure preserving transformations 
$\{T_g\}_{g \in G}, \{S_h\}_{h \in H}$
on the measure space $(Z,\nu)$ we say that the actions are orbit equivalent
if for $\nu$-a.e. $z \in Z, \  Gz = Hz$.
\end{defn}

In \cite{OW-80} and \cite{CFW-81} it is shown that any ergodic measure preserving  action of an 
amenable group is orbit equivalent to an action of $\Z$.

\br

We fix an arbitrary countable amenable group $G$.
As in section \ref{sec-Z} we let $\nu$ denote the Lebesgue measure on the unit interval $I$,
and let $\mu = \nu \times \nu$ be the product measure on the space $X = I \times I$.
Again we write $(X, \mathcal{B},  \mu) = (Z, \mathcal{B}_0, \nu) \times
(Y, \mathcal{B_1},\nu)$, where $Z = I =Y$.

Let $\mathbb{A}(G,\nu)$ denote the Polish space of measure preserving actions $\{T_g\}_{g \in G}$ of 
$G$ on $(Z, \nu)$.
Let $\{(T_0)_g\}_{g \in G} \in \mathbb{A}(G, \nu)$ be a fixed ergodic $G$-action.
On $(X, \mathcal{B},\mu)$ let $\mathcal{A}(\nu)$ denote the $\sig$-algebra of sets of the form 
$A \times I, \ A \in \mathcal{B}_0$.
Let 
$$
\mathcal{M} = \{T \in \mathbb{A}(G, \mu) : T_g(A \times I) = (T_0)_g A \times I\ {\text{for all}} \ A \in \mathcal{B}_0 \ {\text{and}}\ g \in G\},
$$
a closed subset of the Polish space $\mathbb{A}(G, \mu)$.

Next we remark that if two actions $\{T_g\}$ of $G$  and $\{S_h\}$ of $H$  are orbit equivalent, the 
$T$ action is ergodic if and only if the $S$ action is ergodic. The acting groups may be quite different.

If $S_0$ is an action of a group $H$ on  $(Z, \mathcal{B}_0, \nu)$ that is orbit equivalent to $T_0$,
then the collection of $H$-actions $\mathcal{M}'$ defined by
$$
\mathcal{M}' 
= \{S \in \mathbb{A}(H, \mu) : S_h(A \times I) = (S_0)_h A \times I\ {\text{for all}} \ A \in \mathcal{B}_0 \ {\text{and}}\ h \in H\},
$$
can be matched, via a canonical bijective map, with the collection $\mathcal{M}$ as follows.
An element $T  \in \mathcal{M}$ is determined uniquely by the cocycle $\tau(z,g)$ with values in
$\Aut(\nu)$
$$
T_g(z,y) = ((T_0)_g z, \tau(z, g) y), \quad g \in G.
$$
Such cocycles depend only on the equivalence relation defined on $Z$ by the $G$-action $T_0$.
Since $S_0$ is orbit equivalent to $T_0$, i.e.
they have the same orbits, each such cocycle also defines a unique element $S \in \mathcal{M}'$.
Clearly $T$ is weakly mixing over $T_0$ iff $S$ is weakly mixing over $S_0$.
%
%
\begin{thm}\label{main-amen}
Let $G$ be a countable amenable group.
The collection $\mathcal{W}$ 
of measure preserving actions in $\mathcal{M}$ which are weakly mixing relative 
to  a given ergodic action $\{(T_0)_g\}_{g \in G}$ is a dense $G_\del$ subset of $\mathcal{M}$.
\end{thm}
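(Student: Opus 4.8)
The plan is to reduce the amenable case to the $\Z$-case already established in Theorem \ref{main} by transporting everything through an orbit equivalence. We already know from the discussion preceding the statement that the $G_\del$ part goes through verbatim as in Lemma \ref{wm-Gdel}, so the whole content is the density of $\mathcal{W}$ in $\mathcal{M}$. First I would invoke the Ornstein--Weiss theorem (in the form recalled above, cf.\ \cite{OW-80}, \cite{CFW-81}): since $\{(T_0)_g\}_{g\in G}$ is ergodic, it is orbit equivalent to an ergodic $\Z$-action, which we may take to be generated by a single aperiodic transformation $S_0\in\Aut(\nu)$ on $(Z,\mathcal{B}_0,\nu)$ (ergodicity forces aperiodicity). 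Let $H=\Z$ and let $\mathcal{M}'$ be the associated collection of $H$-actions on $(X,\mu)$ fibered over $S_0$, exactly as in the paragraph above the statement.

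Next I would make precise the ``canonical bijective map'' $\Phi:\mathcal{M}\to\mathcal{M}'$ sketched in the excerpt. An element $T\in\mathcal{M}$ is determined by its $\Aut(\nu)$-valued cocycle $\tau(z,g)$ over the $G$-action $T_0$; because orbit equivalence means the $G$-orbits and the $H$-orbits of $S_0$ coincide for $\nu$-a.e.\ $z$, the same cocycle data (now viewed as a cocycle over the $S_0$-orbit relation) determines a unique $S=\Phi(T)\in\mathcal{M}'$, and conversely. The key structural facts I need about $\Phi$ are: (i) it is a bijection; (ii) it is a homeomorphism for the Polish topologies on $\mathcal{M}$ and $\mathcal{M}'$ — this is the technical heart, and I expect it to be the main obstacle, since one must check that convergence of $G$-actions in $\mathbb{A}(G,\mu)$ corresponds, through the fixed orbit-equivalence cocycle relating $G$ to $\Z$ (which itself involves the return-time/orbit-cocycle of the orbit equivalence), to convergence of $\Z$-actions in $\mathbb{A}(\Z,\mu)$; and (iii) $\Phi$ carries the relatively weakly mixing actions onto the relatively weakly mixing actions, i.e.\ $\Phi(\mathcal{W})=\mathcal{W}'$. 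Point (iii) is essentially the observation already made in the excerpt: relative weak mixing of $T$ over $T_0$ is equivalent to ergodicity of $T\times T$ on the relatively independent square, ergodicity is an orbit-equivalence invariant, and the relatively independent square of $\Phi(T)$ over $S_0$ has orbits coinciding with those of the relatively independent square of $T$ over $T_0$ (the orbit equivalence on $Z$ lifts to the fiber product over $Z$); hence $T\in\mathcal{W}\iff\Phi(T)\in\mathcal{W}'$.

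Granting (i)--(iii), the proof concludes quickly: by Theorem \ref{main} applied to the aperiodic transformation $S_0$, the set $\mathcal{W}'$ is dense in $\mathcal{M}'$; transporting back through the homeomorphism $\Phi$, the set $\mathcal{W}=\Phi^{-1}(\mathcal{W}')$ is dense in $\mathcal{M}$. Combined with the already-noted fact that $\mathcal{W}$ is $G_\del$ in $\mathcal{M}$ (proved exactly as Lemma \ref{wm-Gdel}, replacing the single transformation $T$ by the family $\{T_g\}_{g\in G}$ and the averages $\frac1N\sum_{n=0}^{N-1}(T^n\times T^n)$ by averages over a F\o lner sequence, or more simply by noting that ergodicity of $\la$ under the $G$-action $T\times T$ is a countable intersection of open conditions), this gives that $\mathcal{W}$ is a dense $G_\del$ subset of $\mathcal{M}$, as claimed.

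I should also record why it suffices to treat $H=\Z$ generated by an \emph{aperiodic} transformation: an ergodic $\Z$-action on a non-atomic standard space is automatically aperiodic (a periodic point would force the action to live on a finite set, contradicting ergodicity together with non-atomicity), so the hypothesis of Theorem \ref{main} is met. The only genuinely delicate verification is the bicontinuity of $\Phi$ in step (ii); everything else is either quoted (Ornstein--Weiss, Theorem \ref{main}) or a routine adaptation of Lemma \ref{wm-Gdel}.
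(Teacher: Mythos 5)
Your proposal follows essentially the same route as the paper's own proof: invoke the Ornstein--Weiss/Connes--Feldman--Weiss theorem to replace the ergodic $G$-action $T_0$ by an orbit-equivalent single (aperiodic) transformation, identify $\mathcal{M}$ with $\mathcal{M}'$ through the $\Aut(\nu)$-valued cocycles over the common orbit equivalence relation, note that relative weak mixing is preserved under this correspondence, and then apply Theorem \ref{main} to the $\Z$-action, with the $G_\del$ part handled as in Lemma \ref{wm-Gdel}. If anything, you are more explicit than the paper about what must be checked (in particular the bicontinuity of the correspondence $\Phi$, which the paper leaves implicit), so the argument is correct and matches the paper's.
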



 \begin{proof}
Apply the theorem from \cite{OW-80} and \cite{CFW-81} to get a measure preserving transformation 
$R \in \Aut(\nu)$ that is orbit equivalent to the $G$-action $\{(T_0)_{g}\}_{g \in G}$. 
Now check the relative weak mixing using this transformation.

More explicitly, when the equivalence relation is fixed, as above, 
the extensions of $R$ are in one to one correspondence with 
cocycles of the $T_0$ $G$-action with values in the group of measure preserving transformations of the fiber $(Y, \mathcal{B}_1, \nu)$. Thus theorem \ref{main}, when applied to this $\Z$-action, 
gives the desired result.
\end{proof}



\begin{thebibliography}{10}

\bibitem[CFW-81]{CFW-81}
A. Connes,  J. Feldman and B. Weiss,
{\em An amenable equivalence relation is generated by a single transformation},
Ergodic Theory Dynamical Systems, {\bf 1} (1981), no. 4, 431--450 (1982).

\bibitem[Fur-77]{Fur-77}
H. Furstenberg,
{\em Ergodic behavior of diagonal measures and a theorem of
Szemer\'edi on arithmetic progressions\/},
J.\ d'Analyse Math.\
{\bfseries 31}, (1977), 204-256.


\bibitem[GW-18]{GW-18}
E. Glasner and B. Weiss,
{\em Relative weak mixing is generic},
arxiv.org/abs/1707.06425.



\bibitem[Gl-03]{Gl-03}
E. Glasner,
{\it Ergodic Theory via joinings}, Math. Surveys and
Monographs, AMS, {\bf 101}, 2003.

\bibitem[Ha-44]{Ha-44}
P. Halmos, 
{\em In general a measure preserving transformation is mixing},
Ann. of Math. (2) {\bf 45}, (1944). 786--792.

\bibitem[Ha-56]{Ha-56}
P. R. Halmos,
{\em Lectures on ergodic theory},
Publications of the Mathematical Society of Japan, no. 3, 
The Mathematical Society of Japan 1956.


\bibitem[OW-80]{OW-80}
Ornstein, Donald S.; Weiss, Benjamin 
{\em Ergodic theory of amenable group actions. I. The Rohlin lemma}, 
Bull. Amer. Math. Soc. (N.S.), {\bf 2} (1980), no. 1, 161--164.

\bibitem[Ox-80]{Ox-80}
Oxtoby, John C. 
{\em Measure and category}. 
 A survey of the analogies between topological and measure spaces. 
 Second edition. Graduate Texts in Mathematics, 2. Springer-Verlag, 
 New York-Berlin, 1980.
 
\bibitem[Ro-48]{Ro-48}
V. A. Rohlin, 
{\em ``general'' measure-preserving transformation is not mixing},
(Russian) Doklady Akad. Nauk SSSR (N.S.) 60, (1948). 349--351. 
 
\bibitem[Ru-79]{Ru-79}
D. J. Rudolph, 
{\em Classifying the isometric extensions of a Bernoulli shift},
J. Analyse Math. {\bf 34} (1978), 36--60 (1979).

\bibitem[RW-00]{RW-00}
 D. J. Rudolph and B. Weiss,
 {\em Entropy and mixing for amenable group actions},
 Ann. of Math. (2) {\bf 151} (2000), no. 3, 1119--1150.

\bibitem[Sch-17]{Sch-17}
M. Schnurr,
{\em Generic properties of extensions},
arXiv:1704.03709 

\bibitem[Th-75]{Th-75}
J.-P. Thouvenot,
{\em Quelques propri\'et\'es des syst\`emes dynamiques qui
se d\'ecomposent en un produit de deux syst\`mes dont l'un
est un sch\'ema de Bernoulli\/},
Israel J.\ of Math.\ {\bfseries 21}, (1975), 177-207.

\bibitem[Z-76a]{Z-76a}
R. J. Zimmer,
{\em Extensions of ergodic group actions\/},
Illinois J.\ Math.\ {\bfseries 20}, (1976), 373-409.

\bibitem[Z-76b]{Z-76b}
R. J. Zimmer,
{\em  Ergodic actions with generalized discrete spectrum\/},
Illinois J.\ Math.\ {\bfseries 20}, (1976), 555-588.


\end{thebibliography}
\end{document}